\newtheorem{theorem}{Theorem}
\newtheorem{lemma}[theorem]{Lemma}
\newtheorem{proposition}[theorem]{Proposition}
\newtheorem{corollary}[theorem]{Corollary}
\theoremstyle{plain}
\newtheorem{definition}[theorem]{Definition}
\newtheorem{remark}[theorem]{Remark}
\newcommand{\qed}{\hfill \mbox{$\Box$}\medskip\newline}
\newenvironment{proof}{\noindent {\bf Proof:}}{\qed \par}
\newenvironment{prooftr}{\noindent {\bf Proof of Proposition \ref{trans}:}}{}
\newcommand{\im}{\operatorname{im}}
\renewcommand{\dim}{\operatorname{dim}}
\newcommand{\Ann}{\operatorname{Ann}}
\newcommand{\Z}{\mathbb{Z}}
\newcommand{\vp}{\varphi}
\newcommand{\C}{\mathbb{C}}
\newcommand{\la}{\leftarrow}
\newcommand{\fg}{{\mathfrak{g}}}
\newcommand{\fn}{{\mathfrak{n}}}
\newcommand{\fm}{{\mathfrak{m}}}
\newcommand{\fb}{{\mathfrak{b}}}
\newcommand{\ft}{{\mathfrak{t}}}
\newcommand{\fp}{{\mathfrak{p}}}
\newcommand{\fh}{{\mathfrak{h}}}
\newcommand{\fl}{{\mathfrak{l}}}
\newcommand{\fq}{{\mathfrak{q}}}
\renewcommand{\la}{\lambda}
\renewcommand{\a}{\alpha}
\newcommand{\Hom}{\operatorname{Hom}}
\newcommand{\cO}{\mathcal{O}}
\renewcommand{\cL}{\mathcal{L}}
\newcommand{\hcO}{\widehat{\cO}}
\newcommand{\pcO}{\cO'}
\newcommand{\cS}{\mathcal{S}}
\newcommand{\cM}{\mathcal{M}}
\newcommand{\cW}{\mathcal{W}}
\renewcommand{\cD}{\mathcal{D}}
\newcommand{\gr}{\operatorname{gr}}
\newcommand{\soc}{\operatorname{soc}}
\newcommand{\cG}{\mathcal{G}}
\newcommand{\fX}{\mathfrak{X}}
\newcommand{\sQ}{\mathcal{Q}}
\renewcommand{\cH}{\mathcal{H}}
\newcommand{\excise}[1]{}
\newcommand{\Milicic}{Mili\v ci\'c\xspace}
\newcommand{\Mirkovic}{Mirkovi\'c\xspace}
\newcommand{\al}{\alpha}
\begin{document}

\noindent {\Large \bf
Singular blocks of parabolic category $\cO$ and finite W-algebras}
\bigskip\\
{\bf Ben Webster}\footnote{Supported by an NSF Postdoctoral Research
  Fellowship and  by  NSA grant H98230-10-1-0199.}\\
Department of Mathematics, University of Oregon, Eugene, OR 97403\\
\textsf{bwebster@uoregon.edu}

\bigskip
{\small

\begin{quote}
\noindent {\em Abstract.}
We show that each integral infinitesimal block of parabolic category $\cO$
(including singular ones) for a semi-simple Lie algebra can be realized
as a full subcategory of a modified category $\cO$ over a finite
W-algebra for the same Lie algebra.

The nilpotent used to construct this finite W-algebra is determined by
the central character of the block, and the subcategory taken is that
killed by a particular two-sided ideal depending on the original parabolic. The
equivalences in question are induced by those defined by \Milicic-Soergel and
Losev.

We also give a proof of a result of some independent interest: the
singular blocks of parabolic category $\cO$ can be geometrically
realized as ``partial Whittaker sheaves'' on partial flag varieties.
\end{quote}
}
\bigskip

One of the central pillars of modern representation theory is the
category $\cO$ of representations of a semi-simple Lie algebra $\fg$ defined by Bernstein, Gelfand and Gelfand \cite{BGG} and
its generalizations.  Among the remarkable phenomena which have arisen
in the study of these categories is the frequent appearance of
unexpected equivalences of categories. Our aim in this paper is
describe just such an equivalence; in particular, we define an equivalence
between a singular block of category $\cO$ and a modification of the 
category $\cO$ over the finite W-algebra defined by Brundan, Goodwin
and Kleshchev \cite{BGK}.

Let us be more precise.  Recall that if $\chi:Z(U(\fg))\to \C$ is a character of the center of
$U(\fg)$ then a $U(\fg)$-module $M$ has {\bf generalized central
  character} $\chi$ if $z-\chi(z)$ acts locally nilpotently on $M$ for
all $z\in Z(U(\fg))$, and that an {\bf infinitesimal block} of a
category of $\fg$-module is the subcategory of modules with fixed
generalized central character.

In this paper, we show that any integral infinitesimal block of
parabolic category $\cO$ for a semi-simple Lie algebra $\fg$ has a
full and faithful functor $\Psi$ to the category of representations over a
finite W-algebra for $\fg$ corresponding to a nilpotent $e_\chi$ which depends
on the central character $\chi$ of the block.
The functor $\Psi$ is obtained by restricting equivalences defined in previous work by
\Milicic-Soergel \cite{MiS} on representations of Lie algebras and
Losev \cite{LosO} on finite W-algebras.  Thus, the new ingredient of
this paper is to trace how various finiteness properties are
transformed under these equivalences. 

\medskip

The essential image of $\Psi$ (that is, the full subcategory it defines an
equivalence onto) has a flavor similar to
the category $\cO$ for the W-algebra already defined by Brundan,
Goodwin and Kleshchev \cite{BGK}, but it is subtly different, and
appears to be new to the literature.  The reason for its importance is
that it is a special case of a definition associating a ``category
$\cO$'' to any conical symplectic resolution of singularities; in this
case, the resolution in question is 
\[
\fX^Q_{e_\chi}=\{(gQ,x)\in G/Q\times \fg\,\vert\,
x\in \mathrm{Ad}_g(\fq^\perp)\cap \cS\}
\] where $G$ is the adjoint
group of $\fg$ and $\cS$ is the Slodowy
slice to the nilpotent orbit $G\cdot e$. 
The variety $\fX^Q_{e_\chi} $ is symplectic and has a deformation contraction to
the Lagrangian subvariety of elements where $x=e_\chi$, which is a
Spaltenstein variety.  Unfortunately, this variety does not seem
to have an agreed-upon name; we will refer to it as an {\bf S3
  variety}\footnote{``S3'' stands for ``S(lodowy)-S(paltenstein)-S(pringer).''  We
  thank/blame Nick Proudfoot for this coinage.}.

In particular, this theorem provides a direct link between the
geometry of the varieties $\fX^Q_{e_\chi}$ and category $\cO$, whose existence
has been suggested by results of Brundan \cite{Bru06,Bru08} and
Stroppel \cite{Str06b} relating the cohomology of Spaltenstein
varieties (and thus $\fX_{e_\chi}^Q$) to
the centers of singular blocks of parabolic category $\cO$.  The
general properties of such categories and their connection to geometry
will be explored more fully in future work by the author, Braden,
Licata and Proudfoot \cite{BLPWgco}.

Now, we turn to giving a more precise description of the essential
image.  Consider the category $\mathcal{C}$ of modules $M$ the W-algebra $\cW_e$
attached to a nilpotent $e$ such that
\begin{itemize}
\setlength{\itemsep}{-1pt}
\item $M$ has the same central character as the trivial module (in
  particular, the center acts semi-simply on them) and
\item every simple composition factor of $M$ lies in the category
  $\cO$ defined by  \cite{BGK}.
\end{itemize}
This category has two subtle differences from the category $\cO$'s defined in \cite{BGG} for $U(\fg)$ and in \cite{BGK} for the W-algebra:
\begin{itemize}
\setlength{\itemsep}{-1pt}
\item The category $\mathcal{C}$ only contains modules where the center acts semi-simply.
\item We have no analogue of the condition that the Cartan act
  semi-simply, which is imposed in \cite{BGG} (an analogue of this
  semi-simplicity is also imposed in \cite{BGK}).
\end{itemize}
Let $\chi$ be an integral central character of $U(\fg)$, and
$\cO_\chi$ the associated block of category $\cO$; each such character has an associated conjugacy class of Levi subgroups which measure how ``singular'' the block is.  The Weyl group of the associated Levi is the stabilizer of the highest weight of any highest weight module in the corresponding infinitesimal block under the dot action.  Let $e_\chi$ be a regular nilpotent in the Levi corresponding to $\chi$. 

\renewcommand{\thetheorem}{}
\setcounter{theorem}{-1}

\begin{theorem}
~
  \begin{itemize}
\itemsep=.01\itemsep
  \item The block $\cO_\chi$ is equivalent to the category
    $\mathcal{C}$ for the W-algebra
    $\cW_{e_\chi}$ of the nilpotent $e_\chi$ via the functor $\Psi$.
\item  The functor $\Psi$ induces an equivalence between the subcategory of modules locally finite
    for a parabolic $\fq$ in $\cO_\chi$ and the subcategory in
    $\mathcal{C}$ of modules killed by a particular
    two-sided ideal $J_\fq$ in the W-algebra. 
\item The quotient
    $\cW_{e_\chi}/J_\fq$ is a quantization of the variety
    $\fX^Q_{e_\chi}$;  that is, for a particular filtration, we have an
    isomorphism $\gr(\cW_\chi/J_\fq)\cong \C[\fX^Q_{e_\chi}]$.
  \end{itemize}

\end{theorem}

The proof of the theorem above is organized into three sections. Section
\ref{lie} covers preparatory results on category $\cO$ and
Harish-Chandra bimodules; it is likely that many of the results
therein are familiar to experts, but they do not seem to have been
written down together in the necessary form anywhere in the literature. Section
\ref{W-alg} describes the relationship of these results to the
representation theory of the finite W-algebra, and completes the
proof of the first two parts of the theorem above.

Finally, in Section \ref{geometry}, we also discuss briefly the
relationship of these results to geometry.  We show that the ideals
$J_\fq$ appearing above have a natural geometric interpretation which
gives a proof of the last part of the main theorem.

Furthermore, we give a short proof of a result which has circulated as
a folk theorem: the singular integral infinitesimal blocks of
parabolic category $\cO$ can be geometrically realized as
$(N,f)$-equivariant (the so-called ``partial Whittaker'') $D$-modules
or perverse sheaves on $G/Q$ for a particular character $f\colon N\to
\mathbb{G}_a$.  This is a classical theorem when $f$ is generic (going
back to a paper of Kostant \cite{Kos}), and the $Q=B$ case can be
derived from Bezrukavnikov and Yun \cite[Theorem 4.4.1]{BY} (as we
shall explain) but we know of
no proof in the literature of the general case.

\subsection*{Acknowledgments}

I would like to thank Tom Braden, Tony Licata and Nick Proudfoot both
for useful discussions during the development of this paper, and
extremely helpful comments on earlier versions of this paper; Jim
Humphreys, Ivan Losev and an anonymous referee for very useful comments on an
earlier version of the paper; Catharina Stroppel, Roman Bezrukavnikov
and Ivan \Mirkovic for some valuable education on Harish-Chandra
bimodules and Whittaker sheaves; and the Mathematische
Forschungsinstitut Oberwolfach for hospitality while this paper was
written.

\subsection{Lie theory}
\label{lie}

First, we fix notation.  Let $\fg\supset \fq\supset \fb=\fh\oplus \fn$
be a semi-simple complex Lie algebra, a parabolic, a Borel, and a
chosen decomposition of $\fb$ as a Cartan and its nilpotent radical.
Let $G\supset Q\supset B = HN$ be connected groups with these Lie
algebras (which form of $G$ we take is irrelevant to our purposes).
Let $Z$ denote the center of the universal enveloping algebra 
$U(\fg)$.
\renewcommand{\thetheorem}{\arabic{theorem}}

\begin{definition}
We let $\hcO$ be the category of
representations of $\fg$ where $\fn$ and $Z$ act locally finitely.
\end{definition}
This category has a natural infinitesimal block decomposition
$\hcO=\bigoplus\hcO(\chi,f)$ where the sum runs over characters
$\chi\colon Z\to \C$ and $f\colon\fn \to \C$ and $\hcO(\chi,f)$ is the
subcategory where all the irreducible constituents of the restriction
to $U(\fn)\otimes Z$ are $f\otimes \chi$.  For any integral dominant weight
$\la$, we let $\chi_\la$ be the character of $Z$ acting on the unique
irreducible $\fg$-module of highest weight $\la$.

The category $\hcO$ contains two very natural subcategories:
\begin{itemize}
\itemsep=.01\itemsep
\item $\cO$ is the subcategory of modules on which $\fh$ acts semi-simply.
\item $\pcO$ is the subcategory of modules on which $Z$ acts semi-simply.
\end{itemize}
These have the same infinitesimal block decomposition as above.  We
note that $\cO$ is the most commonly studied of these categories: in our notation $\bigoplus_\chi\cO(\chi,0)$
 is precisely the category $\cO$ originally defined by
Bernstein-Gelfand-Gelfand \cite{BGG}, and $\cO(\chi,0)$ its
infinitesimal block for the central character $\chi$.

It's worth noting that for non-integral $\chi$, the infinitesimal
blocks $\cO(\chi,0)$ are not necessarily blocks in the abstract sense;
they may have further decompositions as the sum of orthogonal
subcategories.  For several of the categories we consider, the
abstract block decompositions can actually be quite subtle and
complicated.

We will also wish to consider the parabolic version of category $\cO$.
Let $\cO^\fq(\chi,0)$ be the subcategory of $\cO(\chi,0)$ consisting 
of modules also locally finite for the action of $\fq$.  One
particular special object in $\cO^\fq(\chi_0,0)$ is the dominant
parabolic Verma module
$M^{\fq}=\operatorname{Ind}_{U(\fq)}^{U(\fg)}\mathbbm{1}$, where
$\mathbbm{1}$ is the trivial representation of $\fq$.

For the entirety of the paper, we will always use $f$ and $\mu$ to
denote a Lie algebra map $f\colon\fn\to
\C$ and a weight $\mu$ of $\fg$ such that if $\Delta_f$ is the set of simple roots $\a$ whose root spaces
are not killed by $f$, then 
$\a^\vee(\mu)=-1$ for all $\a\in \Delta_f$ and $\a^\vee(\mu)\in
\Z\setminus\{-1\}$ for all $\a\notin \Delta_f$.  In particular, note
that $f=0$ and $\mu=0$ are compatible in the sense above; this will be
an important special case for us.
Let $\fp_f$ be the parabolic generated by the
standard Borel $\fb$ and the negative root spaces $\fg_{-\a}$ for
$\a\in\Delta_f$.

The starting point for us is an equivalence of categories
$\Phi_\mu\colon\hcO(\chi_\mu,0)\to \hcO(\chi_0,f)$ constructed by
\Milicic and Soergel \cite[Theorem 5.1]{MiS}.  This functor makes
sense for any integral value of $\mu$ 
for any compatible choice of $f$, so for simplicity we leave $f$ out
from the notation.  Our main result of this
section will be to understand the effect of the functor $\Phi_\mu$ on
subcategories of interest to us. 

One case of special interest is the functor
$\Phi_0\colon\hcO(\chi_0,0)\to\hcO(\chi_0,0)$.  This functor was
considered in earlier work of Soergel \cite{Soe86} to construct an
equivalence between $\cO(\chi_0,0)$ and $\pcO(\chi_0,0)$.
We let $I_\fq=\Ann_{U(\fg)}\Phi_0(M^{\fq})$ and let
$\pcO_\fq(\chi_0,f)\subset \pcO(\chi_0,f)$ denote the subcategory of modules killed by
$I_\fq$. We
note that $I_\fq\cap Z=\ker \chi_0$, since $\Phi_0(M^{\fq})$ is a
quotient of a Verma module.  Thus any object of $\hcO(\chi_0,f)$ killed
by $I_\fq$ is automatically a semi-simple $Z$ module, and therefore in
$\pcO(\chi_0,f)$. 

\begin{proposition}\label{trans}
  The subalgebra $\fq$ acts locally finitely and $\fh$ acts semi-simply on
  a module $X\in \hcO(\chi_\mu,0)$ if and only if
  $\Ann(\Phi_\mu(X))\supset I_\fq$; that is, we have an induced
  equivalence $\Phi_\mu\colon\cO^\fq(\chi_\mu,0)\to \pcO_\fq(\chi_0,f)$.

  In particular, setting $\fq=\fb$, we have that $\fh$ acts
  semi-simply on $X\in \hcO(\chi_\mu,0)$ if and only if $Z$ acts
  semi-simply on $\Phi_\mu(X)$; that is, we have an induced equivalence
  $\Phi_\mu\colon\cO(\chi_\mu,0)\to \pcO(\chi_0,f)$.
\end{proposition}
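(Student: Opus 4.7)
The plan is to match two natural conditions on $X$ with two corresponding conditions on $\Phi_\mu(X)$: $\fh$-semisimplicity with $Z$-semisimplicity, and local $\fq$-finiteness with annihilation by $I_\fq$. I will first tackle the $\fq = \fb$ case, as it is the foundational assertion; the general $\fq$ statement then reduces to a question about annihilator ideals.

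For the $\fq = \fb$ case, the goal is to show that $\fh$ acts semi-simply on $X$ if and only if $Z$ acts semi-simply on $\Phi_\mu(X)$. The key point is that the \Milicic-Soergel construction of $\Phi_\mu$, being given by tensoring with an appropriate Harish-Chandra bimodule, intertwines the actions of the relevant completions of $\Sym(\fh)$ on $X$ and of $Z$ on $\Phi_\mu(X)$; these two completions are related by the Harish-Chandra isomorphism $Z \cong \Sym(\fh)^W$ (with the $\rho$-shifted $W$-action) together with the shift by $\mu$. Under this identification, the Jordan structure of $\fh$ on a weight space of $X$ corresponds to that of $Z$ on the corresponding generalized eigenspace of $\Phi_\mu(X)$, so semi-simplicity transfers in both directions.

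For general $\fq$, having reduced to $X \in \cO(\chi_\mu, 0)$ by the previous step, the plan is to characterize local $\fq$-finiteness in annihilator-theoretic terms. A simple $L(\la) \in \cO(\chi_\mu, 0)$ lies in $\cO^\fq(\chi_\mu, 0)$ if and only if it is a quotient of a parabolic Verma module, equivalently, if and only if its annihilator contains that of a fixed dominant parabolic Verma in the block. Since $\Phi_\mu$ preserves simple constituents, the problem reduces to verifying the identity
\[
\Ann_{U(\fg)}\bigl(\Phi_\mu(M^\fq_\mu)\bigr) \;=\; I_\fq,
\]
where $M^\fq_\mu$ denotes the distinguished dominant parabolic Verma in the $\chi_\mu$-block. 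Once this is in place, tracking simple subquotients through $\Phi_\mu$ and its quasi-inverse yields both directions of the biconditional; in particular, the note preceding the proposition that any module killed by $I_\fq$ is automatically $Z$-semisimple feeds directly into the reverse implication, triggering the $\fq = \fb$ step to supply $\fh$-semisimplicity for free.

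The main obstacle will be this annihilator identity. A categorical equivalence does not in general preserve annihilator ideals in $U(\fg)$, so the proof must appeal to the specific bimodule-theoretic description of $\Phi_*$ together with its compatibility across the singular blocks indexed by $\mu$. Concretely, the natural route is to produce a translation or wall-crossing comparison relating $\Phi_\mu$ applied to $M^\fq_\mu$ with $\Phi_0$ applied to $M^\fq$, reducing the identity to the tautology defining $I_\fq$.
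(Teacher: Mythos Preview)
Your treatment of the $\fq=\fb$ case is essentially what the paper does: it invokes the Harish-Chandra bimodule realization of $\Phi_\mu$ and cites \cite[Theorem 5.3]{MiS} to match $\fh$-semisimplicity on one side with $Z$-semisimplicity on the other. So that part is fine, if a bit underspecified.

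The general $\fq$ argument, however, has a real gap. Your plan rests on the claim that a simple $L(\la)\in\cO(\chi_\mu,0)$ lies in $\cO^\fq$ if and only if $\Ann(L(\la))\supset \Ann(M^\fq_\mu)$. This is neither justified nor, in general, true: membership in $\cO^\fq$ is governed by the highest weight (equivalently, by right-cell data), while annihilators of simples are governed by left cells, and these do not match. More to the point, it is the annihilator of $\Phi_\mu(L)$ that matters, and $\Phi_\mu$ does \emph{not} transport $\Ann(L)$ to $\Ann(\Phi_\mu(L))$; in the Harish-Chandra picture the two are the left and right annihilators of the same bimodule, which are typically different. So even granting the identity $\Ann(\Phi_\mu(M^\fq_\mu))=I_\fq$, you have no mechanism for the reverse implication: from $\Ann(\Phi_\mu(L))\supset I_\fq$ you cannot conclude anything about $\Ann(L)$, and hence nothing about $\fq$-local finiteness of $L$. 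Your proposed ``translation/wall-crossing comparison'' only relates the two parabolic Vermas, not arbitrary simples.

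The paper closes this gap with an extra ingredient you are missing. Using the bimodule description, it proves a lemma: for simples $L_1,L_2$, one has $\Ann(\Phi_\mu L_1)\subset \Ann(\Phi_{\mu'}L_2)$ if and only if $L_2$ occurs as a composition factor of $\EuScript{P}L_1$ for some projective functor $\EuScript{P}$; the ``only if'' direction is a theorem of Vogan. For the reverse implication one then argues: $\Ann(\Phi_\mu(L))$ is prime (as the annihilator of a simple), and the product of the annihilators $J_i=\Ann(\Phi_0(L_i))$ over the composition factors $L_i$ of $M^\fq$ lies in $I_\fq\subset\Ann(\Phi_\mu(L))$, so some $J_i\subset\Ann(\Phi_\mu(L))$. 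Vogan's result then forces $L$ to appear in $\EuScript{P}L_i$ for some projective functor, and since $L_i\in\cO^\fq$ and projective functors preserve $\fq$-local finiteness, $L\in\cO^\fq$. The forward direction likewise uses projective functors from the regular block rather than a dominant parabolic Verma in the singular block (which need not even exist, since the dominant weight in $\chi_\mu$ may fail to be $\fq$-dominant).
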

Most of the ideas necessary for the proof of this fact are already in
the literature, which for the convenience of the reader, we now
collect into a pair of lemmata.
\begin{lemma}\label{MS}
  For each object $X\in \hcO(\chi_\mu,0)$ and each projective functor
  $\EuScript{P}\colon\hcO(\chi_{\mu'},0)\to \hcO(\chi_\mu,0)$, we have an
  inclusion $\Ann_{U(\fg)}(\Phi_{\mu}(\EuScript{P}X))\supset
  \Ann_{U(\fg)}(\Phi_{\mu'}(X))$.
\end{lemma}
\textbf{Proof:}
  The \Milicic-Soergel result depends on equivalences of categories
  \begin{equation*}
{_{\chi_\mu}\cH_{\chi_0}}\overset{\varphi_1}\longrightarrow\hcO(\chi_\mu,0)\hspace{.7in}
{_{\chi_0}\cH_{\chi_\mu}}\overset{\varphi_2}\longrightarrow \hcO(\chi_0,f)
\end{equation*}
with certain blocks of the category $\cH$ of Harish-Chandra bimodules,
defined by fixing the generalized character of the left and right
$Z$-actions. The equivalence $\Phi_\mu$ is given by composing these with
the ``flip equivalence'' $\EuScript{F}\colon
{_{\chi_\mu}\cH_{\chi_0}}\to {_{\chi_0}\cH_{\chi_\mu}}$, so we have
$\Phi_\mu=\vp_2\EuScript{F}\vp_1^{-1}$.
There are many such equivalences, one for each compatible choice of $\mu$ and $f$, but
for simplicity, omit them from the notation.

The functors $\vp_i$ are both of the form $$\vp_i(Y)=\varprojlim
Y\otimes_{U(\fg)} L_i^n$$ for an inverse system of representations
$L_i^n$ for $i=1,2$, and $n\geq 0$.  Thus, for any $U(\fg)$-bimodule
$D$, we have $\vp_i(D\otimes_{U(\fg)} Y)\cong D\otimes_{U(\fg)}
\vp_i(Y)$ and for any two-sided ideal $I$, we
have $\vp_i(IY)\cong I\vp_i(Y).$ 

For any projective functor $\EuScript{P}:\hcO(\chi_{\mu'},0)\to
\hcO(\chi_\mu,0)$ where we let $\EuScript{P}_L$ denote the functor on
$U(\fg)$-bimodules given by acting with the projective functor
$\EuScript{P}$ on the left.  By the same principle, we have
$\EuScript{P}\vp_1=\vp_1\EuScript{P}_L$.

One important consequence of this fact is that the left annihilator of
a bimodule $Y$ coincides with the annihilator of $\vp_i(Y)$. Thus,
$\vp_1^{-1}(X)$ is a Harish-Chandra bimodule whose left annihilator is
that of $X$ and whose right annihilator is that of $\Phi_\mu(X)$.
Thus, we have $$\Ann(\Phi_\mu(\EuScript{P}X))=
\operatorname{RAnn}(\EuScript{P}_L\vp_1^{-1}(X))\supset
\operatorname{RAnn}(\vp_1^{-1}(X))=\Ann(\Phi_{\mu'}(X))$$
since any element of $U(\fg)$ which right annihilates $ \vp_1^{-1}(X)$
also right annihilates $\EuScript{P}_L\vp_1^{-1}(X)$.\qed
\begin{lemma}\label{Vogan}
  If $L_1$ and $L_2$ are simple, then $L_2$ appears as a composition factor in $\EuScript{P} L_1$ for some projective functor if and only if $\Ann_{U(\fg)}(\Phi_\mu L_1)\subset \Ann_{U(\fg)}(\Phi_{\mu'} L_2)$.
\end{lemma}
\textbf{Proof:}
Consider the Harish-Chandra bimodules $\vp_1^{-1}(L_1)$ and
  $\vp_1^{-1}(L_2)$.  By reversing left and right in \cite[Theorem
  3.2]{Vogan}, we have $$\Ann(\Phi_\mu
  L_1)=\operatorname{RAnn}(\vp_1^{-1}(L_1))\subset
  \operatorname{RAnn}(\vp_1^{-1}(L_2))=\Ann(\Phi_{\mu'} L_2)$$ if and only if
  there is a projective functor $\EuScript{P}$ such that
  $\vp_1^{-1}(L_2)$ is a composition factor in
  $\EuScript{P}_L\vp_1^{-1}(L_1)$.  Applying the equivalence $\vp_1$,
  this is true if and only if $L_2$ is a composition factor of
  $\EuScript{P}L_1$.\qed

\begin{prooftr}
  If $X$ is in $ \cO^\fq(\chi_\mu,0)$, then there is a projective
  functor $\EuScript{P}\colon\cO^\fq(\chi_0,0)\to\cO^\fq(\chi_\mu,0) $
  such that there is a surjection
  $\EuScript{P}M^\fq\to X$, (this is an old result; it appears as \cite[Proposition 22]{Kho}) and thus, by Lemma \ref{MS}, we have
  inclusions $$\Ann(\Phi_0(M^\fq))\subset\Ann(\Phi_\mu(\EuScript{P}M^\fq))\subset
  \Ann(\Phi_\mu(X)).$$

  Assume $\Phi_\mu(X)$ is killed by $I_\fq$. We wish to prove that
  $X$ is $U(\fq)$-locally finite.  Since a finite-length module is $U(\fq)$-locally
  finite if and only if all its composition
  factors are, we may assume that $X$ is simple. Let $J_i$ be the
  primitive ideals killing the composition factors $L_i$ of $M^\fq$,
  in the order induced by a Jordan-H\"older series.  Then $J_1\cdots
  J_m\subset I_\fq\subset \Ann(\Phi_\mu(X))$.  Since $\Ann(\Phi_\mu(X))$ is
  prime, we have $J_i\subset \Ann(\Phi_\mu(X))$ for some $i$.  Thus, by
 Lemma \ref{Vogan}, $X$ appears as a composition
  factor of $\EuScript{P}L_i$.  Since $L_i$ is $\fq$-locally finite,
  so is $\EuScript{P}L_i$, and thus $X$.

  Finally, we establish the relationship between $\fh$-semi-simplicity
  and $Z$-semi-simplicity.  Applying \cite[Theorem 5.3]{MiS} in the case
  where $n=1$, we have that a bimodule $D\in{_{\chi_\mu}\cH_{\chi_0}}$
  is semi-simple as a right $Z$-module if and only if $\vp_1(D)$ is a
  module on which the Cartan acts semi-simply, that is, $\vp_1(D)$ is a weight
  module.  

 A bimodule $E\in{_{\chi_0}\cH_{\chi_\chi}}$ is
  semi-simple as a {\it left} $Z$-module if and only if $E$ is
  left annihilated by $\ker \chi_0$; similarly $\vp_2(E)$ is semi-simple as
  a $Z$-module if and only if $\vp_2(E)$ is annihilated by
  $\ker\chi_0$.  Since $\ker \chi_0\cdot \vp_2(E)=\vp_2(\ker
  \chi_0\cdot E),$
these conditions are equivalent.

Thus, we have a chain of equivalences
\vspace{-2.5mm}
  \begin{itemize}
\itemsep=.01\itemsep
  \item $X$ is a weight module if and only if 
    \item $\vp_1^{-1}(X)$ is
    semi-simple as a right $Z$-module if and only if
\item     $\EuScript{F}\vp_1^{-1}(X)$ is semi-simple as a left $Z$-module if
    and only if 
\item $\Phi_\mu(X)$ is semi-simple as a $Z$-module.\qed
  \end{itemize}
\end{prooftr}
\par

\subsection{Finite W-algebras}
\label{W-alg}

Now, we will connect the previous section, which only dealt with
representations of Lie algebras, to the study of finite W-algebras.

The finite W-algebra $\cW_e$ is a infinite-dimensional associative
algebra constructed from the data of $\fg$ and a nilpotent element
$e\in\fg$; geometrically, it is obtained by taking a quantization of
the Slodowy slice to the orbit through $e$.  It is most easily defined
as the endomorphism ring $\operatorname{End}_{(U(\fg)}(\sQ_{\Xi,\fm})$ of the generalized Gelfand-Graev representation
$\sQ_{\Xi,\fm}=\Xi\otimes_{U(\fm)}U(\fg)$ for a particular nilpotent
subalgebra $\fm\subset \fn$ depending on $e$, and a
one-dimensional representation  $\Xi$ of $\fm$ constructed using $e$.
We will simplify notation by omitting the subscript and simply writing
$\sQ$. The map $b:\cW_e\to
\sQ$  defined by $b(a)= a(1\otimes 1)$ is an injection,
and $\cW_e$ is often identified with this subspace in $\sQ$.
The {\bf Skryabin functor} $-\otimes_{\cW_e}\sQ:\cW_e\operatorname{-mod}\to U(\fg)\operatorname{-mod}$ is full
and faithful; its essential image is usually called the category of
{\bf Whittaker modules}.
 For
more on the general theory of W-algebras, see \cite{Sk,GG}.

The algebra $\cW_e$ has a category of representations analogous to
$\hcO$ above, originating in the work of Brundan, Goodwin and
Kleshchev \cite{BGK}.  This category is associated a {\em choice} of
parabolic $\fp$ such that
\begin{itemize}\setlength{\itemsep}{-1pt}
\item 
  $e$ is distinguished in the Levi $\fl$ of $\fp$ and
\item  $\fp$ contains a fixed
  maximal torus $\ft$ of the centralizer $C_{\fg}(e)=\{x\in
  \fg|[x,e]=0\}$.
\end{itemize}
This choice of parabolic allows us
to put an preorder on the weights of $\ft$ by declaring that $\la\leq
\mu$ if and only if $\mu-\la$ is a linear combination of weights of
$\ft$ acting on $\fp$.

We have a natural inclusion $U(\ft)\hookrightarrow \cW_e$, (described
in \cite[Theorem 3.3]{BGK}) and thus can decompose any $\cW_e$
representation by its generalized weight spaces with respect to $\ft$.

\begin{definition}Let $\hcO(\cW_e,\fp)$ be the category of modules $X$ such that
\begin{itemize}\setlength{\itemsep}{-1pt}
  \item $\ft$ acts on $X$ with finite dimensional generalized weight spaces.
  \item the weights which appear in $X$ are contained in a finite
    union of sets of the form $\{\mu\vert \mu\leq \la\}$.
\end{itemize}
As before, $\cO(\cW_e,\fp)$ denotes the subcategory on which $\ft$
acts semi-simply, and $\pcO(\cW_e,\fp)$ denotes the subcategory on
which $Z\cong Z(\cW_e)$ acts semi-simply.
\end{definition}

Alternatively, following \cite{LosO}, we could define the category
$\hcO(\cW_e,\fp)$ as the category of $\cW_e$-modules which are locally finite for
the action of the subalgebra $\cW_e^+\subset \cW_e$ of non-negative weight spaces of an element $\xi\in
\ft$ acting by the adjoint action.  The parabolic $\fp$ derived from
$\xi$ is precisely the non-negative
weight spaces for the adjoint action of $\xi$ on $\fg$.  In Losev's
notation, $\hcO(\cW_e,\fp)$ is denoted $\cO(\xi)$, and the
category we and \cite{BGK} denote $\cO(\cW_e,\fp)$, Losev would denote
$\cO^{\ft}(\xi)$ (leaving the nilpotent $e$ implicit).

There is an isomorphism $Z\cong Z(\cW_e)$, typically referenced in the
literature to a footnote to Question 5.1 in the paper of Premet
\cite{Pjos}, where Premet gives an argument he ascribes to
Ginzburg. This isomorphism allows us to identify central characters of
$U(\fg)$ and $\cW_e$.
\begin{definition}
  We denote the subcategory of $\hcO(\cW_e,\fp)$ with generalized
  central character $\chi$ by $\hcO(\chi,\cW_e,\fp)$, and the
  subcategory where the center $Z$ acts semi-simply (and thus
  according to the character
  $\chi$) by $\pcO(\chi,\cW_e,\fp)$. The category $\mathcal{C}$
  defined in the introduction in this notation would be written
  $\pcO(\chi_0,\cW_e,\fp)$.
\end{definition}

We will primarily interested in the case where after fixing a
parabolic $\fp\supset \fb$ with Levi $\fl$, we take $e$ is a regular
nilpotent in $\fl$ (for the Borel given by intersection with $\fb$),
and we will assume we are in this situation.  In type A, all
nilpotents will appear this way, since we can take
$\fp$ to be block upper triangular matrices attached to the Jordan
blocks of $e$; in other types, there are nilpotents that are not of
this form.

Ideals of $U(\fg)$ and of $\cW_e$ are related by a pair of maps: $J\mapsto
J^\dagger$ sends two-sided ideals of $\cW_e$ to two-sided
ideals of $U(\fg)$ and $I\mapsto I_\dagger$ goes the
opposite direction; these both are defined by Losev
in \cite[\S 3.4]{LosW}.  
  As described in \cite[\S 3.5]{Losfin}, the ideal $I_\dagger$ is the preimage $b^{-1}(\sQ J)$.
  Alternatively, it is given by $\mathrm{Wh}^\fm_\fm(I)=\Hom(\sQ,\sQ\otimes_{U(\fq)}I)$, the biWhittaker functor of Ginzburg
  (\cite[(3.3.2)]{Gin08}) applied to $J$; similarly, $\mathrm{Wh}^\fm_\fm(U(\fg)/I)=\cW_e/I_\dagger$.  In particular, $\cW_e/I_\dagger$ is
  the non-commutative Hamiltonian reduction of $U(\fg)/I$ by the
  action of $\fm$ with respect to the 1-dimensional representation $\Xi$.

As before, let $f\colon\fn\to\C$ be a fixed character of this
nilpotent Lie algebra, and we define a nilpotent element
$e=e_f:=\sum_{i\in \Delta_f}E_i.$ This a regular nilpotent in the Levi
$\fl_f$ of $\fp_f$.
\begin{proposition}[\cite{LosO}]\label{Losev}
 There is an equivalence
  $$\cL\colon \hcO(\chi_0,\cW_{e},\fp_f)\to \hcO(\chi_0,f)$$ inducing equivalences
  \begin{equation*}
    \pcO(\chi_0,\cW_e,\fp_f)\cong \pcO(\chi_0,f) \hspace{.6in}  \cO(\chi_0,\cW_e,\fp_f)\cong \cO(\chi_0,f)
  \end{equation*}
  Furthermore, $\cL(X)$ is killed by $I_\fq$ if and only if $X$ is 
  killed by $J_\fq:= (I_\fq)_{\dagger}\subset \cW_e$.
\end{proposition}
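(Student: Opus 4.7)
The plan is to derive this entirely from Losev's work in \cite{LosO}. His main theorem establishes, for any element $\xi\in C_\fg(e)$, an equivalence between the category $\cO(\xi)$ of $\cW_e$-modules locally finite for the non-negative $\xi$-weight spaces and a category of generalized Whittaker $\fg$-modules. I would instantiate this with $\xi$ chosen so that its non-negative adjoint weight spaces on $\fg$ are exactly $\fp_f$; since $e$ is principal in the Levi of $\fp_f$ this is possible, and then Losev's $\cO(\xi)$ is by definition our $\hcO(\cW_e,\fp_f)$. The Gelfand-Graev character on $\fm\subset \fn$ used to build $\cW_e$ extends in the principal-in-Levi case to $f$ on $\fn$, and cutting Losev's equivalence by central character $\chi_0$ (via $Z\cong Z(\cW_e)$) identifies the target with $\hcO(\chi_0,f)$, producing the required $\cL$.

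For the subcategory refinements I would check that $\cL$ intertwines the appropriate subalgebras on both sides. The Premet-Ginzburg identification $Z\cong Z(\cW_e)$ is built into Losev's construction, so semisimplicity of the $Z$-action is preserved, giving $\pcO(\chi_0,\cW_e,\fp_f)\cong \pcO(\chi_0,f)$ at once. For the $\cO$-equivalence, the embedding $U(\ft)\into \cW_e$ from \cite[Theorem 3.3]{BGK} should correspond under $\cL$ to the action of $\ft$ as a subalgebra of $\fh$; the complementary part of $\fh$ acts on any $\cL(X)\in \hcO(\chi_0,f)$ through operators determined by the Whittaker condition, so that $\fh$-semisimplicity of $\cL(X)$ reduces to $\ft$-semisimplicity of $X$.

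The ideal statement follows from Losev's functor $\bullet_\dagger$, which sends a two-sided ideal $I\subset U(\fg)$ to a two-sided ideal $I_\dagger \subset \cW_e$ by a geometric restriction to the Slodowy slice; taking $(I_\fq)_\dagger$ to be the image of $I_\fq$ under this, the compatibility that $\cL(X)$ is killed by $I_\fq$ iff $X$ is killed by $(I_\fq)_\dagger$ is one of the standard properties of $\bullet_\dagger$ proved in \cite{LosO}, following from the fact that $\cL$ intertwines the $U(\fg)$- and $\cW_e$-actions with annihilators related by $\bullet_\dagger$. The main obstacle I anticipate is the $\ft$-matching in the $\cO$-equivalence: although implicit in Losev's setup, explicitly verifying that the BGK embedding of $U(\ft)$ into $\cW_e$ corresponds to the genuine $\ft$-action on the Whittaker side is the step that takes the most care. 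Everything else amounts to repackaging Losev's results in the notation of this paper.
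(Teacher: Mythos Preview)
Your overall strategy---reduce everything to Losev's results---matches the paper's. But you have misjudged which pieces require work and which are already in the literature.

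The $\cO$-equivalence that worries you (the $\ft$-matching) is in fact stated directly in \cite[Theorem 4.1]{LosO}; the paper simply cites it. Conversely, the $\pcO$-equivalence you dismiss as immediate is precisely the part the paper flags as \emph{not} stated in \cite{LosO}. Knowing $Z\cong Z(\cW_e)$ as abstract algebras is not the same as knowing that $\cL$ intertwines the two $Z$-actions; the paper extracts this from \cite[Theorem 4.1(1)]{LosO} together with \cite[Theorem 1.2.2(iii)]{LosW} to get the biconditional ``$Z$ acts semi-simply on $X$ iff it acts semi-simply on $\cL(X)$,'' and only then concludes the $\pcO$-equivalence.

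The ideal statement is also not a single ``standard property of $\bullet_\dagger$ proved in \cite{LosO}.'' The relevant dagger constructions live in \cite{LosW}, not \cite{LosO}, and there are \emph{two} of them, $I\mapsto I^\dagger$ (W-algebra to $U(\fg)$) and $J\mapsto J_\dagger$ (the other direction). What Losev proves is $\Ann(\cL^{-1}(X))^\dagger=\Ann(X)$; from this one direction of the claimed biconditional follows via inclusion-preservation of $(\cdot)_\dagger$, but the other direction requires the separate fact $((I_\fq)_\dagger)^\dagger\supset I_\fq$ from \cite[Proposition 3.4.4]{LosW}. Your sketch collapses this two-step argument into an assertion, which is where the real content lies.
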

We denote the subcategory of ideals killed by $J_\fq$ by $\hcO_\fq(\chi_0,\cW_e,\fp_f)$.

\bigskip

\begin{proof}
  The only part of the equivalences not stated directly in \cite[Theorem 4.1]{LosO} is the
  equivalence of most interest for us, that of
  $\pcO(\chi_0,\cW_e,\fp_f)\cong \pcO(\chi_0,f)$.  

By \cite[Theorem 4.1(1)]{LosW}, we know that
  $\Ann(\cL X)=\Ann(X) ^\dagger$; thus under the isomorphism
  $Z(\cW_e)\cong Z$, the intersections $\Ann(\cL X) \cap
  Z$ and $\Ann(X)\cap Z(\cW_e)$ are identified by \cite[Theorem
  1.2.2(iii)]{LosW}.  This shows that
\begin{center}
$(*)$: the center of $\cW_e$ acts semi-simply on $X\in
  \hcO(\chi_0,\cW_e,\fp_f)$ if and only if the center of $U(\fg)$ acts semi-simply on
  $\cL(X)$.
\end{center} 
 The ``only if'' portion of $(*)$ shows that $\cL$ induces a
  fully faithful functor $\cL:\pcO(\chi_0,\cW_e,\fp_f) \to
  \pcO(\chi_0,f)$. The ``if'' portion of $(*)$ shows that the
  essential surjectivity of $\cL$ implies the essential surjectivity
  of this restriction.  Thus, we have the desired equivalence.

\medskip

  Now, fix a $U(\fg)$-module $Y$ in $\hcO(\chi_0,f)$ such that
  $\Ann(Y)\supset I_\fq$; we wish to establish that $\cL^{-1}Y$ is
  killed by $J_\fq$.  We have already noted that
  $\Ann(Y)=\Ann(\cL^{-1}Y)^\dagger$.  It follows that
  \[\Ann(\cL^{-1}Y)\supset( (\Ann(\cL^{-1}Y))^\dagger)_\dagger=\Ann(Y)_\dagger\] by \cite[Theorem
  3.4.4]{LosW}. Furthermore, $\Ann(Y)_\dagger \supset
  J_\fq$ since $(\cdot)_\dagger$ preserves inclusion, as is noted by Losev in \cite[\S 3.4]{LosW}. 

  On the hand, if $X\in\pcO(\chi_0,\cW_e,\fp_f)$ is killed by $J_\fq$
  then $\cL X$ is a $U(\fg)$-module killed by $(J_\fq)^\dagger$, since
  $(\cdot)^\dagger$ preserves inclusion by \cite[1.2.2(i)]{LosW}
  applied to the two ideals being compared.  	Using
  \cite[Proposition 3.4.4]{LosW} again,
  $(J_\fq)^\dagger=((I_\fq)_\dagger)^\dagger\supset I_\fq$ and $\cL X$ is killed by $I_\fq$.
\end{proof}

To recap, if $\mu$ is an integral weight of $\fg$ such that $\mu+\rho$ is
dominant integral, and $e$ is the principal nilpotent of the Levi in
the parabolic $\fp_f$ corresponding to the stabilizer of $\mu+\rho$
under the usual action of $W$ on weights, then combining Propositions
\ref{trans} and \ref{Losev}, we obtain our main theorem.

\begin{theorem}\label{main}
  There is an equivalence $\Psi=\cL^{-1}\circ \Phi_\mu\colon
  \cO^\fq(\chi_\mu,0)\cong \pcO_\fq(\chi_0,\cW_e,\fp_f)$.
\end{theorem}

\subsection{Relationship to geometry}
\label{geometry}
In order to give the reader some feeling for the ``meaning'' of the
ideal $I_\fq$, we will briefly indicate its relationship to geometry.

We have the $G$-space $G/Q$, and as with any $G$-space, a map
$\a\colon U(\fg)\to D(G/Q)$ to the ring of global differential
operators on $G/Q$ sending a Lie algebra element to the corresponding
vector field. For $i$ in the Dynkin diagram of $\fg$, we let $\bar i$
be the unique node such that $\al_i=-w_0(\al_{\bar i})$.  This Dynkin
diagram automorphism induces an automorphism $\tau\colon \fg\to \fg$
sending $F_i\mapsto F_{\bar i}, E_i\mapsto E_{\bar i}$.  We let
$\bar{\fq},\bar{Q}$ be the parabolic subalgebra and subgroup which are
the image of $\fq$ and $Q$ under this automorphism.

\begin{proposition}\label{parab-iso}
  The map $\a\circ \tau$ induces an isomorphism $D(G/Q)\cong
  U(\fg)/I_\fq$.  In
  particular, in the induced order filtration on $U(\fg)/I_\fq$, we
  have an isomorphism $\gr (U(\fg)/I_\fq)=\C[T^*G/P]$, and $I_\fq$ is prime.
\end{proposition}
\begin{proof}
 Borho and Brylinski \cite[3.8]{BoBrI} show\footnote{We thank the referee
    for pointing out this reference.} that $\ker \a$
  is the annihilator of a simple parabolic Verma module
  $L^\fq$ with highest weight $2\rho_Q-2\rho=w_0^Qw_0(\rho)-\rho$,
  where $w_0^Q$ is the longest element in $W_Q\subset W$; this is the
  unique simple parabolic Verma module 
  with central character $\chi_0$.  If we twist the action of $\fg$ on
  $L^\fq$ by $\tau$, we obtain $L^{\bar \fq}$, the simple with highest
  weight $2\rho_{\bar Q}-2\rho=w_0w_0^Q(\rho)-\rho$.  We let $I:=\Ann(L^{\bar \fq})=\ker\a\circ \tau$. Thus, we need only show that
  $I=I_\fq$.

It is a theorem of Irving \cite[\S 4.3]{Irv85}
  that any two simples which both appear in the socle of a parabolic
  Verma module for $\fq$ are in the same right cell.  In particular, the
  socle of $M^\fq$ is in the same right cell as $L^\fq$.  Thus, the
  socle of $\Phi_0(M^\fq)$ is in the same left cell as $L^{\bar \fq}$, and so
  these simple modules have the same annihilator.  Since any ring
  element annihilating $\Phi_0(M^\fq)$ annihilates its socle, we have
  that $I_\fq\subset I$. 

Let $d(M)$ denote the {\bf Gelfand-Kirillov dimension} of a
$\fg$-module $M$; this is the degree of the Hilbert polynomial of the
associated graded of $M$ in any good filtration.  We now recall
certain facts from \cite{Irv85}:
\begin{itemize}
\item The socle of $M^\fq$ is irreducible \cite[\S 4.1]{Irv85}; thus,
  the socle of $\Phi_0(M^\fq)$ is as well.
\item A simple in $\cO^\fq(\chi_0,0)$ has Gelfand-Kirillov dimension equal or greater to
  $d(\soc M^\fq)$ if and only if it lies in the same right cell as  $\soc M^\fq$.
  \cite[\S 4.3, (iv) \& (v)]{Irv85}; again, the same
  holds for $\Phi_0(M^\fq)$ since $\Phi_0$ preserves G-K dimension.
\item If a simple in the  right cell as  $\soc M^\fq$ appears as a
  composition factor the parabolic Verma module of highest weight
  $w\rho-\rho$, then it is the socle and appears nowhere
  else in the composition series of a parabolic Verma
  module of highest weight $w'\!\rho-\rho$ with $w'\leq w$ in
  Bruhat order \cite[\S 4.6, Proposition 1]{Irv85}. In particular, so
  such composition factor can occur in $M^\fq/\soc M^\fq$.
\end{itemize}
Since $\Phi_0$ preserves Gelfand-Kirillov dimension, we have that 
\begin{itemize}
\item[$(*)$] No composition factors of $\Phi_0(M^\fq)/\soc \Phi_0(M^\fq)$ have Gelfand-Kirillov
  dimension greater than or equal to  $d(\soc \Phi_0(M^\fq))$.  
\end{itemize}
Now, consider the multiplication map \[m\colon I \otimes_{U(\fg)}
\Phi_0(M^\fq)\to \Phi_0(M^\fq).\]  Since $I$ kills $\soc
\Phi_0(M^\fq)$, this factors through a map \[\tilde m\colon I\otimes_{U(\fg)} (
\Phi_0(M^\fq)/\soc  \Phi_0(M^\fq))\to  \Phi_0(M^\fq).\]  By $(*)$, we
know that
$d(\Phi_0(M^\fq)/\soc \Phi_0(M^\fq)) < d(\soc \Phi_0(M^\fq)) $. Tensor
product with a Harish-Chandra bimodule can only decrease G-K
dimension, so we have that \[d(\im\tilde m)\leq d(\Phi_0(M^\fq)/\soc  \Phi_0(M^\fq)) < d(\soc \Phi_0(M^\fq)) .\]  If $\im \tilde m\neq 0$,
then it must contain $\soc \Phi_0(M^\fq)$, which is impossible by the
dimension equality.  Thus $\tilde m=0$ and consequently, $m=0$.  That
is to say, $I$ annihilates $\Phi_0(M^\fq)$, and so $I_\fq=I$.

  This isomorphism, followed by the symbol map on differential
  operators induces an isomorphism $\gr (U(\fg)/I_\fq)\cong \C[T^*G/Q]$.
  Since the associated graded of $U(\fg)/I_\fq$ is the global
  function ring of an irreducible quasi-projective variety, the
  associated graded is an integral domain, and thus so is
  $U(\fg)/I_\fq$.
\end{proof}

As discussed by Borho and
Brylinski, the filtration induced on $U(\fg)/I_\fq$ by this
isomorphism may or may not coincide with that induced
by the usual PBW filtration on $U(\fg)$; see \cite[Theorem
5.6]{BoBrI}.  This the order filtration, in turn, induces a good filtration on
$\cW_e/J_\fq$.

The associated graded of the algebra $\cW_e/J_\fq$ this respect to
this filtration also has a geometric description,
though it is a bit less familiar.  Let $\cS\subset \fg\cong \fg^*$ denote the
{\bf Slodowy slice} $e+\ker \mathrm{ad}_f$ where $e$ and $f$ generate an
$\mathfrak{sl}_2$ as the Chevalley generators.  We let
$\pi_Q:T^*G/Q\to \fg^*$ be the canonical moment map for the
$G$-action; this is an analogue of the Springer map defined by
identifying the cotangent fiber over the coset $gQ$ with $(\mathrm{Ad}_g\fq
)^\perp$.   As in the introduction, we let $\fX^Q_e$ denote the fiber product
$T^*G/Q\times_{\fg^*}\cS$. By \cite[Corollary 1.3.8]{Gin08} applied
to the map $\pi_Q$, this variety is smooth of dimension $\dim
T^*G/Q-\dim G\cdot e$, and it is symplectic, since it can also be
described as the symplectic reduction of $T^*G/Q$ by the action of
connected subgroup $M\subset G$ with Lie algebra $\fm$.  

\begin{proposition}\label{W-ass}
  The associated graded of $\cW_e/J_\fq$ is isomorphic to
  $\C[\fX^Q_e]$, the ring of global functions on $\fX^Q_e$.  In
  particular, $J_\fq$ is prime and $J_\fq^\dagger=I_\fq$.  The
  Skryabin functor defines an equivalence between $\cW_e/J_\fq$-modules and
  Whittaker modules over $U(\fg)/I_\fq$.
\end{proposition}
\begin{proof}
  We apply \cite[Theorem 4.1.4(i)]{Gin08} to the
  bimodule $U(\fg)/I_\fq$; the associated graded of $U(\fg)/I_\fq$ is
  the ring of global functions $\C[T^*G/Q]$, so the associated graded
  of $\cW_e/J_\fq=\mathrm{Wh}^\fm_\fm(U(\fg)/I_\fq)$ is
  $\C[T^*G/Q]\otimes_{\C[\fg^*]}\C[\cS]\cong \C[\fX^Q_e]$.  Since this
  is the ring of functions on a smooth quasi-projective variety, it is
  a domain, and so is $\cW_e/J_\fq$.  

Finally, note that $V(\gr J_\fq)=
  (G\cdot \fq^\perp\cap \cS)$ and so $J_\fq$ is admissible.   Since $J_\fq$ is minimal over itself, 
  \cite[1.2.2(vii)]{LosW} implies that $J_\fq^\dagger=I_\fq$.  Given
  any module $M$ over $\cW_e$ such that $\Ann(M)\supset J_\fq$, we
  have that $\Ann(M\otimes_{\cW_e}\sQ)=\Ann(M)^\dagger \supset I_\fq$,
  and {\it vice versa} for any Whittaker module killed by $I_\fq$.
\end{proof}
\begin{remark}
  In fact, the algebras $U(\fq)/I_\fq$ and $\cW_e/J_\fq$
  have deeper ties to geometry than the theorems above make clear.
  There is a ``geometric category $\cO$''
  attached to a symplectic variety $X$ and a Hamiltonian $\C^*$-action on $X$
  (which plays the role of choosing a Borel for usual category
  $\cO$).  This is a category of sheaves of modules over a
  quantization of the structure sheaf on $X$ (in the sense of
  \cite[Definition 1.3]{BK04a}) satisfying certain geometric
  properties.

  When applied to the varieties $\fX^Q_e$, the categories we arrive at
  exactly $\cO^\fq(\chi_\mu,0)$, and the equivalence is given by
  applying a (modified) sections functor to arrive in the category of
  modules over $\cW_e/J_\fq$, and then applying the equivalence of
  Theorem \ref{main}. A precise description of this construction
  requires an investment in technology which there is no space for in
  this paper, and will be given in a forthcoming paper of the author,
  Braden, Licata and Proudfoot \cite{BLPWgco}; much of the set-up is
  done in the special case of hypertoric varieties (another class of
  symplectic varieties) in \cite[\S 5]{BLPWtorico}.
\end{remark}

Interestingly, this establishes a conjecture which had appeared in an
unpublished manuscript of Bezrukavnikov and \Mirkovic, and which had
been previously studied by the author and Stroppel.
\begin{definition}
  We say that a $D$-module $\cM$ on $G/Q$ is {\bf $\mathbf{(N,f)}$-equivariant} if
  the action of $\fn$ on $\cM$ by $n\cdot m=\a_nm-f(n)m$ (where $\a_n$
  is the vector field given by the infinitesimal action of $n$ on
  $G/Q$) integrates to an $N$-equivariant structure on $\cM$.  Some
  authors use ``strongly equivariant'' for this form of equivariance.
\end{definition}
In the case where $Q=B$, these D-modules are studied by Ginzburg in
\cite[\S 5]{Gin08}, where he calls them ``Whittaker D-modules.''

There is also a version of $(N,f)$-equivariance for perverse sheaves
on $G/Q$, though in the algebraic category this can only be made sense
of using perverse sheaves on the characteristic $p$ analogue of $G/Q$ (for more
on this approach, see \cite{BBM}). In this case, we let $f'$ be a character $f'\colon N/\mathbb{F}_p\to
\mathbb{G}_a/\mathbb{F}_p$ such that $\Delta_{f'}=\Delta_{f}$ and
consider an Artin-Schreier sheaf pulled back from
$\mathbb{G}_a/\mathbb{F}_p$ by $f'$, which we denote by $\mathcal A$. 
A $(N,\mathcal A)$-equivariant sheaf is a pair of a sheaf $\cG$
together with an isomorphism $a^*\cG\cong \mathcal{A}\boxtimes \cG$ of
sheaves on $N\times G/Q$, where $a\colon N\times G/Q\to G/Q$ is the
action map.  Of course, this map must satisfy an associativity
property as with usual equivariance.  
 Such sheaves are sometimes called
``partial Whittaker'' or ``Whittavariant'' sheaves and
are discussed in detail in \cite[\S 3.1]{BY}.

\begin{corollary}
  The category $\cO^\fq(\chi_\mu,0)$ is equivalent to the category of
  $(N,f)$-equivariant $D$-modules on $G/Q$ (and thus also to the
  category of $(N,f')$-equivariant perverse sheaves on $G/Q$, with all
  groups considered as algebraic groups over $\mathbb{F}_p$).
\end{corollary}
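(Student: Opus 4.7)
The plan is to chain Corollary \ref{main} with Beilinson--Bernstein localization on $G/Q$. By Corollary \ref{main}, it suffices to exhibit $\pcO_\fq(\chi_0,f)$ as the category of $(N,f)$-equivariant $D$-modules on $G/Q$. The preceding Proposition identifies $D(G/Q)\cong U(\fg)/I_\fq$, and since $\chi_0$ is regular integral, Beilinson--Bernstein upgrades this to an equivalence of abelian categories between quasi-coherent $D$-modules on $G/Q$ and $U(\fg)$-modules annihilated by $I_\fq$; global sections $\cM\mapsto\Gamma(G/Q,\cM)$ gives one direction, localization the other. Under this equivalence, the semi-simple action of $Z$ by $\chi_0$ is automatic from $I_\fq\cap Z=\ker\chi_0$.

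Next I would translate the $(N,f)$-equivariance condition on $\cM$ into a condition on $X=\Gamma(G/Q,\cM)$. The action of $n\in\fn\subset U(\fg)$ on $X$ is precisely by the vector field $\a_n$. By the definition given in the paper, $(N,f)$-equivariance asks that the shifted operators $n\mapsto\a_n-f(n)$ integrate to an action of $N$ on $\cM$; since $N$ is unipotent and simply connected, integrability of this shifted Lie algebra action is equivalent to its local nilpotence. Translating back through the shift, this says $\fn$ acts on $X$ locally finitely with every generalized $\fn$-eigencharacter equal to $f$, which is exactly the defining condition of $\hcO(\chi_0,f)$. Together with annihilation by $I_\fq$, this pins down $X\in\pcO_\fq(\chi_0,f)$, so global sections induces the desired equivalence.

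For the perverse sheaf version I would invoke the Riemann--Hilbert correspondence in the characteristic-$p$ incarnation developed in \cite{BBM}: tensoring with the Artin--Schreier sheaf pulled back along $f'$ converts the shift by $f$ on the $D$-module side into the twisted $N$-equivariance condition defining $(N,f')$-equivariant perverse sheaves on $G/Q$ (viewed as an $\mathbb{F}_p$-variety). The main obstacle is justifying abelian-level Beilinson--Bernstein for $G/Q$ at $\chi_0$ and the precise dictionary between integrability of the shifted $\fn$-action and local finiteness with character $f$; both become routine once one uses that $\chi_0$ is regular and that $N$ is unipotent and simply connected, and that the Artin--Schreier/exponential sheaf realizes the $f$-twist in a functorial way compatible with $\fn$-equivariance.
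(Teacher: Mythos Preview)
Your argument is correct and follows the same route as the paper: invoke Beilinson--Bernstein for $G/Q$ together with the identification $D(G/Q)\cong U(\fg)/I_\fq$ to match $D$-modules on $G/Q$ with $U(\fg)/I_\fq$-modules, then translate $(N,f)$-equivariance into local finiteness of $\fn$ with sole generalized character $f$, landing in $\pcO_\fq(\chi_0,f)$; the equivalence with $\cO^\fq(\chi_\mu,0)$ then comes from Proposition~\ref{trans} (equivalently the first half of Corollary~\ref{main}). Your write-up is in fact a bit more explicit than the paper's on two points---why integrability of the shifted $\fn$-action amounts to local nilpotence (using that $N$ is unipotent and simply connected), and why $Z$-semisimplicity is automatic from $I_\fq\cap Z=\ker\chi_0$---but the strategy is identical.
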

\begin{proof}
  By Beilinson-Bernstein \cite{BB}, the sections functor
  $\cD_{G/Q}-\mathsf{mod}\to U(\fg)/I_\fq-\mathsf{mod}$ is an
  equivalence.  If $\cM$ is a $(N,f)$-equivariant $D$-module, then the
  action of $U(\fn)$ on $\Gamma(\cM)$ is locally finite, with $\C_f$
  being the only representation which appears in the composition
  series, so $M\in\pcO_{\fq}(\chi_0,f)$.  On the other hand, if
  $M\in\pcO_{\fq}(\chi_0,f)$, then the $f$-twisted $\fn$-action via
  differential operators on the localization will integrate to an
  $(N,f)$-action.
\end{proof}

\begin{remark}
  As we noted earlier, the $B=Q$ case of this can be derived from the
  ``paradromic/Whitta\-var\-iant'' duality of \cite[Theorem 4.4.1]{BY}.
  Bezrukavnikov and Yun establish a Koszul duality between the
  category of $(N,f')$-equivariant perverse sheaves on $G/B$ (in their
  notation, $B$ is denoted $I$) and the category of $N$-equivariant
  perverse sheaves on $G/P_{f'}$, which is equivalent to
  $\cO^{\fp_{f'}}_0$ by \cite[3.5.3]{BGS96}.  That the Koszul dual of
  the latter category is a singular block of category $\cO$ is the
  main theorem of \cite{BGS96} (Theorem 1.1.3).  Stringing together
  these equivalences gives that between a singular block of category
  $\cO$ and Whittaker sheaves on $G/B$ for $f'$.
\end{remark}

 \bibliography{./symplectic}
\bibliographystyle{amsalpha}
\end{document}